\NeedsTeXFormat{LaTeX2e}
\documentclass[12pt,a4paper]{article}
\usepackage[a4paper,top=2cm,bottom=2cm,left=2cm,right=2cm,bindingoffset=5mm]{geometry}
\usepackage[english,italian]{babel}
\usepackage[latin1]{inputenc}
\usepackage{amsfonts}
\usepackage{fancyhdr}
\usepackage{indentfirst}
\usepackage{color}%per scrivere con diversi colori
\usepackage{graphicx}%per inserire grafici
\usepackage{newlfont}

\usepackage{amssymb}
\usepackage{amsmath}
\usepackage{latexsym}
\usepackage{amsthm}
\usepackage{mathrsfs}
\usepackage{hyperref}
\hypersetup{colorlinks=true,linkcolor=blue}

\hyphenation{}
\theoremstyle{plain}

\linespread{1.5}

\newtheorem{teo}{Theorem}[section]

\newtheorem{lemma}[teo]{Lemma}
\newtheorem{prop}[teo]{Proposition}
\newtheorem{corol}[teo]{Corollary}

\theoremstyle{definition}

\theoremstyle{remark}
\newtheorem{oss}[teo]{Remark}
{\left\lbrace\begin{array}{@{}l@{}}}%
{\end{array}\right.}

\newcommand{\R}{\mathbb{R}}

\newcommand{\elle}{\mathcal{L}}

\date{}
\begin{document}
\selectlanguage{english}%

\title{\bf{ON TWO-DIMENSIONAL NONLOCAL VENTTSEL' PROBLEMS IN PIECEWISE SMOOTH DOMAINS}}
%\author{S. Creo,\and M. R. Lancia,\and A. I. Nazarov,\and P. Vernole}
\author{Simone Creo\thanks{Dipartimento di Scienze di Base e Applicate per l'Ingegneria, Universit\`{a} degli Studi di Roma "La Sapienza"
    Via A. Scarpa 16, 00161 Roma, Italy.
   E-mail: simone.creo@sbai.uniroma1.it, mariarosaria.lancia@sbai.uniroma1.it},
Maria Rosaria Lancia$^*$, Alexander Nazarov\thanks{St.~Petersburg Department of Steklov Mathematical Institute, Fontanka 27, 191023 St.~Petersburg, Russia, and St.~Petersburg State University, Universitetskii pr. 28, 198504 St.~Petersburg, Russia. E-mail: al.il.nazarov@gmail.com}, Paola Vernole\footnote{ Dipartimento di Matematica, Universit\`{a} degli Studi di Roma "La Sapienza",
    P.zale Aldo Moro 2,   00185 Roma, Italy. E-mail: vernole@mat.uniroma1.it}
		}
\maketitle

\begin{abstract}
\noindent We establish the regularity results for solutions of nonlocal Venttsel' problems in polygonal and piecewise smooth two-dimensional domains.
\end{abstract}
% \vspace{0.5cm}

\bigskip

%\vspace{0.3cm}
%\begin{keywords}
\noindent\textbf{Keywords: }Venttsel' problems, nonlocal operators, piecewise smooth domains.\\
% \end{keywords}

%\begin{AMS}
\noindent{\textbf{AMS Subject Classification:} 35J25, 35R02, 35B45, 35B65.}
%\end{AMS}

 \pagestyle{myheadings} \thispagestyle{plain}

\section*{Introduction}

\noindent In this paper we investigate an elliptic nonlocal Venttsel' problem for the Laplace operator in a bounded polygonal domain $\Omega\subset\R^2$.\\
Lately Venttsel' problems in irregular domains (for example having prefractal or fractal boundary) have been widely investigated, see e.g. \cite{JEE} and \cite{LRDV} and the references listed in. In \cite{JEE} the reader can also find motivations for the study of such problems.\\ %From a mathematical point of view, Venttsel' problems on fractal domains present many difficulties, related to the irregular nature of the fractal boundary.\\
There is a huge literature on {\it local} linear and quasi-linear Venttsel' problems (see e.g. \cite{AP-NAsurv}, \cite{AP-NA2}, \cite{nazarov}, \cite{Ar-Me-Pa-Ro}, \cite{Gol-Ru}, \cite{warma2009}, \cite{VELEZ2014} and the references listed in). As to the nonlocal case, among the others we refer to \cite{LVSV}, \cite{VELEZ2015}, \cite{WAR12} and the references listed in.\\%with boundary $\partial\Omega$ which consists of a finite number of angles.
Our aim in this paper is to study the regularity in weighted Sobolev spaces of the weak solution of a nonlocal Venttsel' problem in a polygonal domain. These results will be crucial to obtain optimal a priori error estimates for the numerical approximation of the problem at hand; to this regard, for the local case, see \cite{Ce-Dl-La} and \cite{Ce-La-Hd}.\\
We first point out that a general nonlocal term appears also in the pioneering original paper by Venttsel' \cite{vent59}. Here we consider a nonlocal term which can be regarded as a version of the fractional Laplace operator $(-\Delta)^s$, for $0<s<1$, on the boundary. The presence of this  term could, in principle, deteriorate the regularity of the solution on the boundary. We prove that this is not the case, and that the weak solution of the nonlocal Venttsel' problem belongs to $H^2(\partial\Omega)$, i.e. it has the same regularity as in the local case (see \cite{Ce-Dl-La}).\\
It is well known that solutions of boundary value problems in piecewise smooth domains usually belong to weighted Sobolev spaces. In our case, the interplay between the boundary equation and the equation in the domain essentially influences the range of weight exponents, see \eqref{boundsigma}.\\
We remark that the techniques used in the local case to prove the regularity on the boundary are very different from the ones used in this paper.\\
The obtained results are a starting point in order to investigate the regularity of the solution of nonlocal Venttsel' problems in the case of domains with fractal boundary (for example of Koch-type domains).\\
%The regularity achieved in this nonlocal case is crucial looking towards the numerical approximation of this problem. Indeed, by proving the $H^2(\partial\Omega)$-regularity of the (unique) weak solution of the Venttsel' problem, we can prove also regularity in weighted Sobolev space in $\Omega$. These regularity results are all crucial in order to apply key results due to Grisvard \cite{grisvard} regarding the rate of convergence of the numerical solution to the weak one.\\
The paper is organized as follows. In Section \ref{sec2} we define the domain and the functional spaces which will appear in this paper and we state the problem. In Section \ref{sec4} we prove a key a priori estimate for the solution. In Section \ref{sec5} we give an existence and uniqueness result for the weak and strong solutions of the nonlocal Venttsel' problem.

\section{Statement of the problem}\label{sec2}
\setcounter{equation}{0}

\noindent Let $\Omega\subset\R^2$ be a domain with polygonal boundary $\partial\Omega$. Namely, we suppose that $\partial\Omega$ is made by a finite number of segments, which form a finite number $N$ of angles $\alpha_j$, for $j=1,\dots,N$, and let us denote with $\alpha$ the opening of the largest angle in $\partial\Omega$.\\ %(for more details on polygonal domains, see \cite{grisvard}).\\
In the following we denote by $L^2(\Omega)$ the Lebesgue space with respect to the Lebesgue measure $dx$ on $\Omega$, and by $L^2(\partial\Omega)$ the Lebesgue space on the boundary with respect to the arc length $d\ell$. By $H^s(\Omega)$, for $s\in\mathbb{N}$, we denote the standard Sobolev spaces. By $\mathcal{C}(\partial\Omega)$ we denote the set of continuous functions on $\partial\Omega$.\\
%We define the Sobolev spaces on $\partial\Omega$ following \cite{bregil}.
By $H^s(\partial\Omega)$, for $0<s<1$, we denote the Sobolev space on $\partial\Omega$ defined by local Lipschitz charts as in~\cite{necas}. For $s\geq 1$, we define the Sobolev space $H^s(\partial\Omega)$ by using the characterization given by Brezzi-Gilardi in~\cite{bregil}:
\begin{equation}\notag
H^s (\partial\Omega)=\{v\in \mathcal{C} (\partial\Omega)\,|\, v|_{\overset{\circ}{M}}\in H^s (\overset{\circ}{M})\},
\end{equation}
where $M$ denotes a side of $\partial\Omega$ and $\overset{\circ}{M}$ denotes the corresponding open segment (for the general case see Definition 2.27 in \cite{bregil}).\\
We denote the trace of $u$ on $\partial\Omega$ with $\gamma_0 u$. Sometimes we will use the same symbol to denote $u$ and its trace $\gamma_0 u$. The interpretation will be left to the context.
%\begin{defi}
%Let $D\subset\mathbb{R}^2$ be an open set and $f\in H^s (D)$. We define the trace operator $\gamma_0$ as
%\begin{center}
%$\displaystyle\gamma_0 f(x):=\lim_{r\to 0} \frac{1}{|B_r (x)\cap D|}\,\int_{B_r (x)\cap D} f(y)\,dy$,
%\end{center}
%at every point $x\in\overline{D}$ where the limit exists.
%\end{defi}
%\noindent It is known that the above limit exists at \emph{quasi every} $x\in\overline{D}$ with respect to the $(s,2)$-capacity (see~\cite{adhei}).

%\noindent In the following we will make use of the following result (see Theorem 2.24 in~\cite{bregil}).
%\begin{prop}\label{traccia}
%Let $s>1/2$. Then $H^{s-\frac{1}{2}} (\partial\Omega)$ is the trace space of $H^s (\Omega)$, i.e.:
%\begin{enumerate}
%\item $\gamma_0$ is a linear and continuous operator from $H^s (\Omega)$ in $H^{s-\frac{1}{2}} (\partial\Omega)$;
%\item there exists a linear and continuous operator $\ext$ from $H^{s-\frac{1}{2}} (\partial\Omega)$ in $H^s (\Omega)$ such that $\gamma_0\circ\ext$ is the identity operator of $H^{s-\frac{1}{2}} (\partial\Omega)$.
%\end{enumerate}
%\end{prop}
\noindent We now recall the Friedrichs inequality, see \cite[page 24]{mazya} for more details.
\begin{prop} Let $u\in H^1(\Omega)$. There exists a positive constant $C$ depending on $\Omega$ such that
\begin{equation}\label{poincare}
\|u\|^2_{L^2(\Omega)}\leq C\left(\|\nabla u\|^2_{L^2(\Omega)}+\|u\|^2_{L^2(\partial\Omega)}\right).
\end{equation}
\end{prop}

\noindent Let $r=r(x)$ be the distance from the set of vertices. For $\gamma\in\R$, and $s=1,2,\dots$, we denote by $H^s_\gamma (\Omega)$ the Kondratev (or weighted Sobolev) space of functions for which the norm
\begin{equation}\notag
\|u\|_{H^s_\gamma (\Omega)}=\left(\sum_{|k|\leq s} \int_{\Omega} r^{2(\gamma-s+|k|)} |D^{k} u(x)|^2\,dx\right)^{\frac{1}{2}}
\end{equation}
is finite, see \cite{kond}. For $s=0$, this space evidently coincides with the weighted Lebesgue space $L^2_\gamma (\Omega)$. We also define, for $s>0$ integer, the space $H^{s-\frac{1}{2}}_\gamma (\partial\Omega)$ as the trace space of $H^s_\gamma (\Omega)$ equipped with the norm
\begin{equation}\notag
\|u\|_{H^{s-\frac{1}{2}}_\gamma (\partial\Omega)}=\inf_{v=u\,\text{on}\,\partial\Omega}\,\|v\|_{H^s_\gamma (\Omega)}.
\end{equation}
We define the composite spaces
$$V^1(\Omega,\partial\Omega):=\{u\in H^1 (\Omega)\,:\, \gamma_0 u\in H^1 (\partial\Omega)\}$$
and
$$V^2_\sigma(\Omega,\partial\Omega):=\{u\in H^1 (\Omega)\,:\, r^\sigma D^2 u\in L^2(\Omega),\,\gamma_0 u\in H^2(\partial\Omega)\}.$$
%In the following we use also the space $W^{2,q}_{\gamma=0}(\Omega)$ defined as the set of functions $u$ such that the norm
%\begin{center}
%$\displaystyle \|u\|_{W^{2,q}_{\gamma=0}(\Omega)}=\left(\sum_{|k|\leq 2} \int_{\Omega} r^{q(|k|-2)}|D^{k} u(x)|^q\,dx\right)^{\frac{1}{q}}$
%\end{center}
%is finite. For the general definition of the space $W^{s,p}_\gamma(\Omega)$ see~\cite{borsuk}. We remark that, for $p=2$, the space $W^{s,2}_\gamma(\Omega)$ coincides with the space $H^s_\gamma(\Omega)$.

\noindent We consider the problem formally stated as
\begin{eqnarray}\label{pbform}
& &-\Delta u=f \qquad\qquad\qquad\qquad\qquad\,\,\text{in $\Omega$,}\label{pbform1}\\[2mm]
& &-\Delta_\ell u=-\frac{\partial u}{\partial\nu}-bu-\theta_s(u)+g\quad \text{on $\partial\Omega$}\label{pbform2},
\end{eqnarray}
where $f$ and $g$ are given functions, $\displaystyle\Delta_\ell=\frac{\partial^2}{\partial\ell^2}$, $\nu$ the unit vector of exterior normal, $b\in L^\infty (\partial\Omega)$ and we set $\theta_s\colon H^{s} (\partial\Omega)\to H^{-s} (\partial\Omega)$ as follows: for every $u,v\in H^{s} (\partial\Omega)$
\begin{equation}\notag
\langle\theta_s (u),v\rangle=\iint_{\partial\Omega\times \partial\Omega}\frac{(u(x)-u(y))(v(x)-v(y))}{|x-y|^{1+2s}}\,d\ell(x)\,d\ell(y),
\end{equation}
where $\langle\cdot,\cdot\rangle$ denotes the duality pairing between $H^{-s} (\partial\Omega)$ and $H^{s} (\partial\Omega)$. We remark that the nonlocal term $\theta_s(\cdot)$ can be regarded as an analogue of the fractional Laplace operator $(-\Delta)^s$ on the boundary.\\
We now define the bilinear form as follows:
\begin{equation}
E(u,v)=\int_{\Omega} \nabla u\,\nabla v\,dx+\int_{\partial\Omega} \nabla_\ell u\,\nabla_\ell v\,d\ell+\int_{\partial\Omega} b\,u\,v\,d\ell+\langle\theta_s (u),v\rangle,
\end{equation}
for every $u,v\in V^1(\Omega,\partial\Omega)$.\\
\noindent We consider the weak formulation of problem \eqref{pbform1}-\eqref{pbform2}:
\begin{equation}\label{deb}
\begin{split}
&\text{Given $f$ and $g$, find $u\in V^1(\Omega,\partial\Omega)$ such that}\,\,\displaystyle E(u,v)=\int_{\Omega} f\,v\,dx+\int_{\partial\Omega} g\,v\,d\ell\\[2mm]
&\text{for every $v\in V^1(\Omega,\partial\Omega)$.}
\end{split}
\end{equation}
In what follows we denote by $C$ all positive constants. The dependence of constants on some parameters is given in parentheses. We do not indicate the dependence of $C$ on the geometry of $\Omega$.

\section{A priori estimates}\label{sec4}
\setcounter{equation}{0}

\begin{teo}\label{aprioriest} Let $u\in V^2_\sigma(\Omega,\partial\Omega)$ be a solution of problem \eqref{pbform1}-\eqref{pbform2}. Suppose that $s<3/4$. Then there exists a positive constant $C=C(\sigma)$ such that 
\begin{equation}\label{stima5}
\|u\|^2_{H^1(\Omega)}+\|r^\sigma D^2 u\|^2_{L^2(\Omega)}+\|u\|^2_{H^2(\partial\Omega)}\leq C(\sigma)(\|u\|^2_{L^2(\partial\Omega)}+\|r^\sigma f\|^2_{L^2(\Omega)}+\|g\|^2_{L^2(\partial\Omega)}),
\end{equation}
provided 
\begin{equation}\label{boundsigma}
1-\frac{\pi}{\alpha}<\sigma<\frac{1}{2},\qquad \sigma\geq-\frac{1}{2}
\end{equation}
(recall that $\alpha$ is the opening of the largest angle in $\partial\Omega$).
\end{teo}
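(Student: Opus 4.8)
The plan is to prove the a priori estimate \eqref{stima5} by combining the variational structure of the problem with known elliptic regularity results in Kondratev spaces for polygonal domains. I would proceed in three main stages: first establishing control of the boundary energy, then invoking weighted regularity for the bulk equation, and finally bootstrapping to the claimed $H^2(\partial\Omega)$ regularity of the trace.

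\emph{Step 1 (energy identity and boundary estimate).} Since $u$ solves \eqref{pbform1}--\eqref{pbform2} weakly, I would test the weak formulation \eqref{deb} against $v=u$ to obtain the energy identity $E(u,u)=\int_\Omega f u\,dx+\int_{\partial\Omega} g u\,d\ell$. The left-hand side controls $\|\nabla u\|^2_{L^2(\Omega)}$, $\|\nabla_\ell u\|^2_{L^2(\partial\Omega)}$, and the nonlocal seminorm $\langle\theta_s(u),u\rangle\sim [u]^2_{H^s(\partial\Omega)}$ (the term with $b$ being lower order, absorbed using $b\in L^\infty$). Using the Friedrichs inequality \eqref{poincare} to recover $\|u\|^2_{L^2(\Omega)}$ and a weighted Cauchy--Schwarz/Young argument on the right-hand side (estimating $\int_\Omega fu$ by $\|r^\sigma f\|_{L^2(\Omega)}\|r^{-\sigma}u\|_{L^2(\Omega)}$ and invoking a weighted Hardy-type inequality that holds precisely because $\sigma<\tfrac12$ so that $r^{-\sigma}$ is integrable near the vertices), I would derive the first-level bound $\|u\|^2_{H^1(\Omega)}+\|u\|^2_{H^1(\partial\Omega)}\leq C(\|u\|^2_{L^2(\partial\Omega)}+\|r^\sigma f\|^2_{L^2(\Omega)}+\|g\|^2_{L^2(\partial\Omega)})$.

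\emph{Step 2 (weighted bulk regularity).} With $u\in H^1(\Omega)$ and $-\Delta u=f$, I would apply Kondratev's weighted regularity theory in the polygon: the operator $r^\sigma D^2$ is controlled in $L^2(\Omega)$ by $\|r^\sigma f\|_{L^2(\Omega)}$ together with the boundary data, \emph{provided} the weight exponent avoids the critical set determined by the angles. This is exactly where the constraint $1-\tfrac{\pi}{\alpha}<\sigma$ in \eqref{boundsigma} enters: it guarantees that the line $\mathrm{Re}\,\lambda=$ const in the Mellin-transform analysis at the largest corner contains no eigenvalue of the associated pencil, so the $H^2_\sigma$ estimate is valid. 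The normal derivative $\frac{\partial u}{\partial\nu}$, needed for the boundary equation, then lives in the appropriate weighted trace space $H^{1/2}_\sigma(\partial\Omega)$.

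\emph{Step 3 (boundary bootstrap to $H^2$).} This is the crux and the main obstacle. Rewriting the boundary equation \eqref{pbform2} as $-\Delta_\ell u = -\frac{\partial u}{\partial\nu}-bu-\theta_s(u)+g$, I would estimate $\|u\|^2_{H^2(\partial\Omega)}$ by controlling the $L^2(\partial\Omega)$ norm of the right-hand side. The terms $-\frac{\partial u}{\partial\nu}$, $bu$, and $g$ are handled by Step 2, the boundedness of $b$, and the data; the genuinely delicate term is $\theta_s(u)$, the fractional-Laplacian analogue. The restriction $s<\tfrac34$ is what makes this work: since $u\in H^1(\partial\Omega)$ (from Step 1), one has $\theta_s(u)\in H^{1-2s}(\partial\Omega)\hookrightarrow L^2(\partial\Omega)$ exactly when $1-2s>-\tfrac12$, i.e. $s<\tfrac34$, so the nonlocal term does not destroy the second-order boundary regularity. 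The hard part is making the mapping properties of $\theta_s$ on the piecewise-smooth boundary rigorous near the vertices, where the Brezzi--Gilardi characterization of $H^s(\partial\Omega)$ and the interaction of the weight $r^\sigma$ with the fractional seminorm must be tracked carefully; combining these estimates and absorbing lower-order contributions yields \eqref{stima5}.
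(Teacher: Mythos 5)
Your overall architecture (control the boundary equation's right-hand side in $L^2(\partial\Omega)$, use Kondratev theory for the bulk, absorb) matches the paper's ``Munchhausen trick'', but the step on which the whole theorem turns --- why $s<3/4$ lets you put $\theta_s(u)$ in $L^2(\partial\Omega)$ --- rests on a false embedding. You argue that $u\in H^1(\partial\Omega)$ gives $\theta_s(u)\in H^{1-2s}(\partial\Omega)\hookrightarrow L^2(\partial\Omega)$ ``exactly when $1-2s>-\tfrac12$''. But a Sobolev space of \emph{negative} order never embeds into $L^2$: for $\tfrac12<s<\tfrac34$ the space $H^{1-2s}(\partial\Omega)$ strictly contains $L^2(\partial\Omega)$, so this gives no $L^2$ bound at all, and your Step 3 collapses precisely in the range of $s$ where the theorem is nontrivial. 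The paper's argument uses the \emph{full} $H^2(\partial\Omega)$ regularity of $u$ (in the Brezzi--Gilardi sense): after rectifying the boundary near a vertex, $u$ is a smooth function plus a kink $c|t|\tilde\eta(t)\in H^\beta(\R)$ for every $\beta<3/2$, whence $\theta_s(u)\in H^{\beta-2s}(\partial\Omega)$ with norm controlled by $\|u\|_{H^2(\partial\Omega)}$; choosing $\beta\in(2s,3/2)$ --- possible exactly because $s<3/4$ --- gives a space of \emph{positive} order compactly embedded in $L^2(\partial\Omega)$, and an Ehrling-type interpolation then splits $\|\theta_s(u)\|^2_{L^2}$ into $\varepsilon\|u\|^2_{H^2(\partial\Omega)}$ plus $C(\varepsilon)\|u\|^2_{L^2(\partial\Omega)}$, which is what allows absorption into the left-hand side. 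Since the bound is in terms of $\|u\|_{H^2(\partial\Omega)}$ itself, the circularity is essential and must be closed by this $\varepsilon$-absorption, not by a linear bootstrap.

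There are two further gaps in your Step 2. First, to apply the weighted Dirichlet theory you need the trace datum in $H^{3/2}_\sigma(\partial\Omega)$, and a function in $H^2(\partial\Omega)$ with a genuine corner does not lie there without correction: the paper subtracts a smooth function $U$, linear near the corners, with $(u-U)(P)=\nabla_\ell(u-U)(P)=0$ at each vertex, and uses Hardy's inequality on each side to place $v=u-U$ in $H^{3/2}_{-1/2}(\partial\Omega)$; this is the only place the hypothesis $\sigma\geq-\tfrac12$ of \eqref{boundsigma} is used, and it never appears in your argument --- a sign something is missing. Second, knowing $\partial u/\partial\nu$ in a \emph{weighted} trace space is not enough: you need it in unweighted $L^2(\partial\Omega)$ with a constant that can be made small, which the paper obtains by splitting the trace operator into a compact part supported away from the vertices and a part near the vertices whose norm is $O(\delta^{1/2-\sigma})$ --- this is where $\sigma<\tfrac12$ actually enters, not (only) in the integrability of $r^{-\sigma}$ in your Step 1.
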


\begin{proof}
We use the so-called \emph{Munchhausen trick}. %We start by assuming that $\frac{\partial u}{\partial\nu}$ and $\theta_2 (u)$ belong to $L^2(\partial\Omega)$, hence the right-hand side of equation \eqref{bordo2} belongs to $L^2(\partial\Omega)$.
We consider the right-hand side in \eqref{pbform2} as known functions. Then we easily have that %$u\in H^2(\partial\Omega)$ and the following estimate holds:
\begin{equation}\label{stima1}
\|u\|^2_{H^2(\partial\Omega)}\leq C\left(\left\|\frac{\partial u}{\partial\nu}\right\|^2_{L^2(\partial\Omega)} + \|u\|^2_{L^2(\partial\Omega)}+\|\theta_s (u)\|^2_{L^2(\partial\Omega)}+\|g\|^2_{L^2(\partial\Omega)}\right).
\end{equation}
%In the following the norm of $\theta_2(u)$ in an Hilbert space $H'$ has to be intended as the norm in $H$ of the unique element $F$ associated to the operator by Riesz's theorem.\\
First we estimate $\|\theta_s(u)\|^2_{L^2(\partial\Omega)}$. Since $u\in H^2(\partial\Omega)$, it is sufficient to consider the local behavior of $u$ near the vertices. Without loss of generality, we can assume that the vertex is located at the origin. We introduce a smooth cutoff function $\eta$ and rectify $\partial\Omega$ near the origin. It is easy to see that $u\eta|_{\partial\Omega}$ becomes a function on $\R$ which is the sum of a smooth function and a term $c|t|\tilde\eta(t)$ (here $\tilde\eta$ is a one-dimensional cutoff function near the origin).\\
It is well known that $c|t|\tilde\eta(t)\in H^\beta (\R)$ for every $\beta<3/2$. This implies that $\theta_s (u)\in H^{\beta-2s} (\partial\Omega)$ and
\begin{equation}\notag
\|\theta_s(u)\|^2_{H^{\beta-2s}(\partial\Omega)}\leq C\|u\|^2_{H^2(\partial\Omega)},
\end{equation}
where $C$ depends on $\beta$ and $s$.\\
%First we point out that since $u\in H^2(\partial\Omega)$ it follows in particular that $\theta_s (u)\in H^{\beta} (\partial\Omega)$ with $\beta<1/2$. This can be seen by using the definition of Sobolev space by the Fourier transform $\mathcal{F}$:
%\begin{center}
%$H^s (\mathbb{R}) =\{v\in\mathcal{S}'\,\mid\, (1+|\xi|^2)^{s/2}\mathcal{F}[v]\in L^2(\mathbb{R})\}$,
%\end{center}
%where $\mathcal{S}'$ is the space of tempered distributions.\\ Tutte le costanti dipendono dalla geometria!
%We rectify the boundary $\partial\Omega$. We consider a smooth function $U$ on $\overline\Omega$ which is linear near the corners of $\partial\Omega$ and such that $(u-U)(P)=\nabla(u-U)(P)=0$ in every vertex $P$ of $\partial\Omega$. The Fourier transform of $v:=u-U$ behaves like $\xi^{-2}$, hence it belongs to $H^\alpha (\R)$ if and only if $\alpha<3/2$. Since the operator $\theta_2(\cdot)$ behaves like the fractional Laplacian $(-\Delta)^\frac{1}{2}$, then $\theta_2(v)$ belongs to $H^{\alpha-1} (\R)$ with $\alpha<3/2$. This implies that $\theta_2(u)\in H^\beta (\partial\Omega)$ with $\beta<1/2$.\\
We fix $\beta\in(2s,3/2)$. From the compact embedding of $H^{\beta-2s} (\partial\Omega)$ in $L^2(\partial\Omega)$ we deduce that for every $\varepsilon>0$ there exists a constant $C(\varepsilon)$ such that
\begin{equation}\notag
\|\theta_s(u)\|^2_{L^2(\partial\Omega)}\leq\varepsilon\|\theta_s(u)\|^2_{H^{\beta-2s} (\partial\Omega)}+C(\varepsilon)\|\theta_s(u)\|^2_{H^{-s}(\partial\Omega)},
\end{equation}
see Lemma 6.1, Chapter 2 in~\cite{necas}. Similarly, we have
\begin{equation}\notag
\|\theta_s(u)\|^2_{H^{-s}(\partial\Omega)}\leq C\|u\|^2_{H^s(\partial\Omega)}\leq\varepsilon\|u\|^2_{H^2(\partial\Omega)}+C(\varepsilon)\|u\|^2_{L^2(\partial\Omega)}.
\end{equation}
Therefore we obtain the following estimate using \eqref{stima1}:
\begin{equation}\notag
\|u\|^2_{H^2 (\partial\Omega)} \leq C\left(\left\|\frac{\partial u}{\partial\nu}\right\|^2_{L^2(\partial\Omega)} +\|g\|^2_{L^2(\partial\Omega)}+\varepsilon\|u\|^2_{H^{2}(\partial\Omega)}+C(\varepsilon)\|u\|^2_{L^2(\partial\Omega)}\right).
\end{equation}
By choosing $\varepsilon$ sufficiently small we obtain
%\begin{equation}\label{stima3}
%\|\theta_2(u)\|^2_{H^{\beta}(\partial\Omega)}\leq \hat{C}_1\left(\left\|\frac{\partial u}{\partial\nu}\right\|^2_{L^2(\partial\Omega)} + \|u\|^2_{H^{\frac{1}{2}}(\partial\Omega)}+\|f\|^2_{L^2(\partial\Omega)}\right),
%\end{equation}
%In \eqref{stima1} we can estimate the norm of $\theta_2 (u)$ in $L^2(\partial\Omega)$ with its norm in $H^{\beta}(\partial\Omega)$ and then with the right-hand side of \eqref{stima3}. Then we have the following estimate:
\begin{equation}\label{intermedia}
\|u\|^2_{H^2(\partial\Omega)}\leq C\left(\left\|\frac{\partial u}{\partial\nu}\right\|^2_{L^2(\partial\Omega)} + \|u\|^2_{L^2(\partial\Omega)}+\|g\|^2_{L^2(\partial\Omega)}\right).
\end{equation}
%with $C$ independent of $\varepsilon$.\\
We now estimate $\left\|\frac{\partial u}{\partial\nu}\right\|^2_{L^2(\partial\Omega)}$. %Since $u\in H^2(\partial\Omega)$, from the trace theorem it follows that its extension $\ext u$ belongs in particular to $H^{2+\varepsilon}(\Omega)$ for $\varepsilon>0$. Then its trace $\gamma_0(\ext u)=u$ $u$ in particular belongs to $H^{\frac{3}{2}+\varepsilon}(\partial\Omega)$ for $\varepsilon>0$.
We consider a smooth function $U$ on $\overline\Omega$ which is linear near the corners of $\partial\Omega$ and such that $(u-U)(P)=\nabla_\ell(u-U)(P)=0$ in every vertex $P$ of $\partial\Omega$. Since $D^2 U$ vanishes in neighborhoods of vertices, without loss of generality we can assume that for every $\gamma\in\R$
\begin{equation}\label{stimaU}
\|U\|^2_{H^1(\Omega)}+\|r^{\gamma} D^2 U\|^2_{L^2 (\Omega)}+\|U\|^2_{H^2(\partial\Omega)}\leq C(\gamma)\|u\|^2_{H^2(\partial\Omega)}.
\end{equation}

If we consider the function $v=u-U$, from Hardy inequality applied on each segment of $\partial\Omega$ (see~\cite{hardy}) we obtain that $v\in H^2_{\gamma=0} (\partial\Omega)$. By rescaling we deduce $v\in H^\frac{3}{2}_{-\frac{1}{2}}(\partial\Omega)$, and
\begin{equation}\label{stimav}
\|v\|_{H^\frac{3}{2}_{-\frac{1}{2}}(\partial\Omega)}\leq C\|u\|_{H^2(\partial\Omega)}.
\end{equation}

Now we consider $v$ as the solution of the Dirichlet problem
\begin{equation}\label{problemav}
-\Delta v=f+\Delta U\in L^2_\sigma (\Omega);\qquad v|_{\partial\Omega}\in H^\frac{3}{2}_\sigma (\partial\Omega)
\end{equation}
(here we used the last restriction in \eqref{boundsigma}). From Theorem 3.1, Chapter 2 in~\cite{nazplam} (with $l=0$) it follows that $v\in H^2_{\sigma} (\Omega)$ if $|\sigma-1|<\pi/\alpha$ (we recall that $\alpha$ is the opening of the largest angle in $\partial\Omega$). %and it admits the following decomposition:
%\begin{equation}
%u(x)=U(x)+\chi (r)\frac{1}{\sqrt{\pi}}c_1 r^\frac{\pi}{\alpha}\sin(\pi\omega\alpha^{-1})+w(x),
%\label{decomposizione}
%\end{equation}
%where $w\in H^2(\Omega)$ and $c_1=(f+\Delta U,\zeta_1)_{L^2(\Omega)}+(u-U,\partial_\nu \zeta_1)_{L^2(\partial\Omega)}$, where $\chi$ is a cutoff function and $(r,\omega)$ are the polar coordinates.\\
With regard to \eqref{stimaU} and \eqref{stimav}, this implies
\begin{equation}\label{stimaU+v}
\|u\|^2_{H^1(\Omega)}+\|r^\sigma D^2 u\|^2_{L^2(\Omega)}\leq C(\sigma)(\|r^\sigma f\|^2_{L^2(\Omega)}+\|u\|^2_{H^2(\partial\Omega)})
\end{equation}
(to estimate the first term, we also used that $\sigma\leq 1$ in \eqref{boundsigma}).\\
By rescaling, we deduce that $\nabla u\in L^2_{\sigma-1/2} (\partial\Omega)$ and
\begin{equation}\label{stimaderivata}
\|\nabla u\|^2_{L^2_{\sigma-1/2} (\partial\Omega)}\leq\|u\|^2_{H^1(\Omega)}+\|r^\sigma D^2 u\|^2_{L^2(\Omega)}.
\end{equation}
We define a cutoff function $\eta_\delta$ such that
\begin{equation}\notag
\eta_\delta(r)=1\quad\text{for}\quad r>\delta,\qquad \eta_\delta (r)=0\quad\text{for}\quad r<\delta/2.
\end{equation}
Now we introduce the following trace operator:
\begin{equation}\notag
u\longrightarrow\frac{\partial u}{\partial\nu}\Big|_{\partial\Omega}=\eta_\delta\frac{\partial u}{\partial\nu}\Big|_{\partial\Omega}+(1-\eta_\delta)\frac{\partial u}{\partial\nu}\Big|_{\partial\Omega}=:\mathcal{K}_1(\delta)u+\mathcal{K}_2(\delta)u.
\end{equation}
The operator $\mathcal{K}_1(\delta)\colon H^2_\sigma(\Omega)\to L^2(\partial\Omega)$ is evidently compact. Using \eqref{stimaU+v}, we obtain for arbitrary $\varepsilon>0$
\begin{equation}\notag
\|\mathcal{K}_1(\delta)u\|^2_{L^2(\partial\Omega)}\leq\frac{\varepsilon}{2} (\|r^\sigma f\|^2_{L^2(\Omega)}+\|u\|^2_{H^2(\partial\Omega)})+C(\varepsilon,\sigma,\delta)\|u\|^2_{L^2(\partial\Omega)}.
\end{equation}
From \eqref{stimaU+v} and \eqref{stimaderivata} we deduce
\begin{equation}\notag
\|\mathcal{K}_2(\delta)u\|^2_{L^2(\partial\Omega)}\leq C(\sigma)\delta^{\frac{1}{2}-\sigma}(\|r^\sigma f\|^2_{L^2(\Omega)}+\|u\|^2_{H^2(\partial\Omega)}).
\end{equation}
By choosing $\delta(\sigma,\varepsilon)$ sufficiently small, we obtain
\begin{equation}\notag
\left\|\frac{\partial u}{\partial\nu}\right\|^2_{L^2(\partial\Omega)}\leq\varepsilon(\|r^\sigma f\|^2_{L^2(\Omega)}+\|u\|^2_{H^2(\partial\Omega)})+C(\varepsilon,\sigma)\|u\|^2_{L^2(\partial\Omega)}.
\end{equation}
Substituting the above inequality into \eqref{intermedia} we obtain
\begin{equation}\notag
\|u\|^2_{H^2(\partial\Omega)}\leq C\left(\varepsilon(\|r^\sigma f\|^2_{L^2(\Omega)}+\|u\|^2_{H^2(\partial\Omega)})+C(\varepsilon,\sigma)\|u\|^2_{L^2(\partial\Omega)}+\|g\|^2_{L^2(\partial\Omega)}\right).
\end{equation}
By choosing $\varepsilon$ sufficiently small we obtain
\begin{equation}\label{intermedia2}
\|u\|^2_{H^2(\partial\Omega)}\leq C\left(\|r^\sigma f\|^2_{L^2(\Omega)}+C(\sigma)\|u\|^2_{L^2(\partial\Omega)}+\|g\|^2_{L^2(\partial\Omega)}\right).
\end{equation}
Taking into account \eqref{stimaU+v}, we get the thesis.
\end{proof}

\section{Solvability of the Venttsel' problem}\label{sec5}
\setcounter{equation}{0}

\noindent We begin the existence and uniqueness of the weak solution.\\
%We define now the energy form $E$ as
%\begin{equation}\label{defforma}
%E[u]=\int_{\Omega}|\nabla u|^2\,dx+\int_{\partial\Omega}|\nabla_s u|^2\,ds+\int_{\partial\Omega} b\,|u|^2\,ds+\langle\theta_2 (u),u\rangle
%\end{equation}
%with domain
%\begin{center}
%$V(\Omega, \partial\Omega)=\{u\in H^1 (\Omega)\,:\, \gamma_0 u\in H^1 (\partial\Omega)\}$,
%\end{center}
%where $\nabla_s$ denotes the tangential derivative on $\partial\Omega$.\\%such that $\displaystyle\inf_{\partial\Omega}\,b\geq 0$ but it is not identically zero,
By Friedrichs inequality (see \eqref{poincare}), we equip $V^1(\Omega,\partial\Omega)$ with the equivalent Hilbertian norm
\begin{equation}\notag
\|u\|_{V^1(\Omega,\partial\Omega)}=\left(\|\nabla u\|^2_{L^2(\Omega)}+\|\nabla_\ell u\|^2_{L^2(\partial\Omega)}+\|u\|^2_{L^2(\partial\Omega)}\right)^{\frac{1}{2}}.
\end{equation}
%In order to prove the coercivity of the energy form $E$, we introduce an equivalent norm on $V^1(\Omega,\partial\Omega)$, which is defined as
%\begin{equation}\label{normeq}
%|||u|||_{V^1(\Omega,\partial\Omega)}:=\left(\|\nabla u\|^2_{L^2(\Omega)}+\Phi(u)\right)^{\frac{1}{2}},
%\end{equation}
%where $\Phi(u):=\|u\|^2_{H^1(\partial\Omega)}+\langle\theta_s(u),u\rangle$.
%\begin{prop} The norms $\|\cdot\|_{V^1(\Omega,\partial\Omega)}$ and $|||\cdot|||_{V^1(\Omega,\partial\Omega)}$ are equivalent.
%\end{prop}
%\begin{proof} We note that $\|u\|^2_{V(\Omega, \partial\Omega)}\leq C_2 |||u|||^2_{V^1(\Omega,\partial\Omega)}$ thanks to \eqref{poincare}. To prove that $\|u\|^2_{V^1(\Omega,\partial\Omega)}\geq C_1 |||u|||^2_{V^1(\Omega,\partial\Omega)}$ we note that
%\begin{center}
%$\displaystyle\langle\theta_s (u),u\rangle\leq\|u\|^2_{H^s (\partial\Omega)}\leq\,C\|u\|^2_{H^1(\partial\Omega)}$.
%\end{center}
%\end{proof}
\begin{lemma}\label{lemma1} Let $b\geq 0$ and $b\not\equiv 0$. Then the energy form $E[u]=E(u,u)$ generates an equivalent norm in $V^1(\Omega,\partial\Omega)$.
\end{lemma}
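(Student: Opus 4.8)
The plan is to establish the two-sided estimate $c\,\|u\|^2_{V^1(\Omega,\partial\Omega)}\le E[u]\le C\,\|u\|^2_{V^1(\Omega,\partial\Omega)}$ for suitable positive constants. First I would write out the energy explicitly as
\[
E[u]=\|\nabla u\|^2_{L^2(\Omega)}+\|\nabla_\ell u\|^2_{L^2(\partial\Omega)}+\int_{\partial\Omega}b\,u^2\,d\ell+\iint_{\partial\Omega\times\partial\Omega}\frac{(u(x)-u(y))^2}{|x-y|^{1+2s}}\,d\ell(x)\,d\ell(y),
\]
and observe that the last (nonlocal) term is exactly the squared Gagliardo seminorm $[u]^2_{H^s(\partial\Omega)}$, hence nonnegative. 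The upper bound is then routine: the first two terms appear verbatim in $\|u\|^2_{V^1(\Omega,\partial\Omega)}$; the term $\int_{\partial\Omega}b\,u^2\,d\ell$ is controlled by $\|b\|_{L^\infty(\partial\Omega)}\|u\|^2_{L^2(\partial\Omega)}$ since $b\in L^\infty(\partial\Omega)$; and, because $s<1$, the embedding $H^1(\partial\Omega)\hookrightarrow H^s(\partial\Omega)$ gives $[u]^2_{H^s(\partial\Omega)}\le C\|u\|^2_{H^1(\partial\Omega)}\le C\|u\|^2_{V^1(\Omega,\partial\Omega)}$.

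The content of the lemma is the coercivity (lower) bound, which I would prove by a compactness--contradiction argument. Dropping the nonnegative nonlocal term, it suffices to bound $\|u\|^2_{L^2(\partial\Omega)}$ by $\|\nabla u\|^2_{L^2(\Omega)}+\|\nabla_\ell u\|^2_{L^2(\partial\Omega)}+\int_{\partial\Omega}b\,u^2\,d\ell$. Suppose this fails; then there is a sequence $u_n\in V^1(\Omega,\partial\Omega)$ with $\|u_n\|_{V^1(\Omega,\partial\Omega)}=1$ and $E[u_n]\to0$. In particular $\|\nabla u_n\|_{L^2(\Omega)}\to0$, $\|\nabla_\ell u_n\|_{L^2(\partial\Omega)}\to0$ and $\int_{\partial\Omega}b\,u_n^2\,d\ell\to0$, whence $\|u_n\|_{L^2(\partial\Omega)}\to1$. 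By the Friedrichs inequality \eqref{poincare} the sequence $u_n$ is bounded in $H^1(\Omega)$, and its traces are bounded in $H^1(\partial\Omega)$.

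Passing to a subsequence, I would use the compact embeddings $H^1(\Omega)\hookrightarrow L^2(\Omega)$ and $H^1(\partial\Omega)\hookrightarrow L^2(\partial\Omega)$ (and compactness of the trace $\gamma_0\colon H^1(\Omega)\to L^2(\partial\Omega)$, valid on the Lipschitz domain $\Omega$) to obtain $u_n\rightharpoonup u$ in $H^1(\Omega)$ with $u_n\to u$ in $L^2(\Omega)$ and $\gamma_0 u_n\to\gamma_0 u$ in $L^2(\partial\Omega)$. Since $\|\nabla u_n\|_{L^2(\Omega)}\to0$ and $\Omega$ is connected, $u$ is a constant $c_0$, so $\gamma_0 u=c_0$; this is consistent with $\|\nabla_\ell u_n\|_{L^2(\partial\Omega)}\to0$, which independently forces the limit trace to be constant. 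Strong $L^2(\partial\Omega)$ convergence together with $b\in L^\infty(\partial\Omega)$ gives $\int_{\partial\Omega}b\,c_0^2\,d\ell=\lim_n\int_{\partial\Omega}b\,u_n^2\,d\ell=0$, so $c_0^2\int_{\partial\Omega}b\,d\ell=0$; as $b\ge0$ and $b\not\equiv0$ we have $\int_{\partial\Omega}b\,d\ell>0$, forcing $c_0=0$. But then $\|\gamma_0 u\|_{L^2(\partial\Omega)}=\lim_n\|u_n\|_{L^2(\partial\Omega)}=1$, contradicting $\gamma_0 u=0$. This proves coercivity, and together with continuity the equivalence of norms. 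The main obstacle is precisely this last step: the nonlocal term contributes nothing to coercivity beyond its nonnegativity, so the full hypothesis $b\not\equiv0$ must be used to eliminate the constant functions, which otherwise form the kernel of the remaining quadratic form.
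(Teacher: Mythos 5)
Your proof is correct. The upper bound is handled exactly as in the paper: nonnegativity and $H^1(\partial\Omega)$-boundedness of the Gagliardo form $\langle\theta_s(u),u\rangle$, plus $b\in L^\infty(\partial\Omega)$. For the coercivity bound, however, you take a genuinely different route. The paper drops the nonnegative nonlocal term and then argues directly by decomposition: the Poincar\'e inequality makes $\|\nabla u\|^2_{L^2(\Omega)}+\|\nabla_\ell u\|^2_{L^2(\partial\Omega)}$ equivalent to the $V^1$-norm on the subspace orthogonal to constants, and the term $\int_{\partial\Omega}b\,u^2\,d\ell$ is nondegenerate on constants, which together yield the claim. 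You instead run a compactness--contradiction argument on a normalized minimizing sequence, using the compact embeddings $H^1(\Omega)\hookrightarrow L^2(\Omega)$ and $H^1(\partial\Omega)\hookrightarrow L^2(\partial\Omega)$ and the weak continuity of the trace to show the limit is a constant, which the hypothesis $b\ge 0$, $b\not\equiv 0$ then forces to vanish, contradicting $\|u_n\|_{L^2(\partial\Omega)}\to 1$. Both arguments rest on the same structural facts (the kernel of the $b$-less form is the constants, and the $b$-term eliminates them), but your version is more self-contained and actually fills in a detail the paper leaves implicit, namely how the equivalence on the orthogonal complement and the nondegeneracy on constants combine to give coercivity on all of $V^1(\Omega,\partial\Omega)$; the paper's version is shorter and avoids extracting subsequences. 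The only hypothesis you use tacitly is the connectedness of $\Omega$ (needed to conclude that a function with vanishing gradient is constant), which is guaranteed since $\Omega$ is a domain.
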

\begin{proof} Since $b\in L^\infty(\partial\Omega)$ and
\begin{equation}\notag
\langle\theta_s (u),u\rangle\leq C\|u\|^2_{H^s (\partial\Omega)}\leq\,C\|u\|^2_{H^1(\partial\Omega)},
\end{equation}
we obtain that $E[u]\leq C\|u\|^2_{V^1(\Omega,\partial\Omega)}$. Then, since $\langle\theta_s(u),u\rangle\geq 0$, we have
\begin{equation}\notag
E[u]\geq\|\nabla u\|^2_{L^2(\Omega)}+\|\nabla_\ell u\|^2_{L^2(\partial\Omega)}.
\end{equation}
By the Poincar\'e inequality, $E[u]$ generates an equivalent norm on the subspace of functions in $V^1(\Omega,\partial\Omega)$ orthogonal to constants. Since the term $\int_{\partial\Omega}bu^2\,d\ell$ does not degenerate on constants, the statement follows.
\end{proof}
%The next two properties of the form $E$ follow.
%\begin{prop}\label{coer} The form $E[u]$ is continuous, i.e. there exists a positive constant $\tilde{C}$
%such that
%\begin{center}
%$E[u]\leq\tilde{C}\|u\|^2_{V^1(\Omega,\partial\Omega)}$.
%\end{center}
%Moreover, $E[u]$ is coercive on $V^1(\Omega,\partial\Omega)$, i.e. there exists a positive constant $\bar{C}$ depending on $b$ such that
%\begin{center}
%$E[u]\geq\bar{C}\|u\|^2_{V^1(\Omega,\partial\Omega)}$.
%\end{center}
%\end{prop}

%\noindent We now define the bilinear form associated to the energy form $E[u]$ as follows: for every $u,v\in V^1(\Omega,\partial\Omega)$
%\begin{equation}
%E(u,v)=\int_{\Omega} \nabla u\,\nabla v\,dx+\int_{\partial\Omega} \nabla_\ell u\,\nabla_\ell v\,d\ell+\int_{\partial\Omega} b\,u\,v\,d\ell+\langle\theta_s (u),v\rangle.
%\end{equation}

%\noindent Before proving the existence and uniqueness results, in the next section we focus our attention on proving a crucial a priori estimate.
%\noindent We consider the weak formulation of problem \eqref{pbform}:
%\begin{equation}\label{deb}
%\begin{sistema}
%\text{Given $f\in L^2(\overline\Omega,m)$, find $u\in V(\Omega, \partial\Omega)$ such that}\\[2mm]
%\displaystyle E(u,v)=\int_{\Omega} \nabla u\,\nabla v\,dx+\int_{\partial\Omega} \nabla_s u\,\nabla_s v\,ds+\int_{\partial\Omega} bu\,v\,ds+\langle\theta_2 (u),v\rangle=\int_{\Omega} f\,v\,dx+\int_{\partial\Omega} f\,v\,ds\\[2mm]
%\text{for every $v\in V^1(\Omega,\partial\Omega)$.}
%\end{sistema}
%\end{equation}
\noindent The following existence and uniqueness result holds.
\begin{corol}\label{weaksol} Let $f\in L^2(\Omega)$, $g\in L^2(\partial\Omega)$ and let $b$ be as in Lemma \ref{lemma1}. Then there exists a unique weak solution in $V^1(\Omega,\partial\Omega)$ of problem \eqref{deb}. Moreover
\begin{equation}\label{stimacont}
\|u\|_{V^1(\Omega,\partial\Omega)}\leq C(\|f\|_{L^2(\Omega)}+\|g\|_{L^2(\partial\Omega)}),
\end{equation}
where $C$ depends only on the coercivity constant of $E$.
\end{corol}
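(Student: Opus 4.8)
The plan is to deduce Corollary~\ref{weaksol} from the Lax--Milgram theorem applied to the bilinear form $E(\cdot,\cdot)$ on the Hilbert space $V^1(\Omega,\partial\Omega)$, using Lemma~\ref{lemma1} as the decisive ingredient. First I would observe that the right-hand side of \eqref{deb} defines a linear functional $F(v)=\int_\Omega f\,v\,dx+\int_{\partial\Omega} g\,v\,d\ell$ on $V^1(\Omega,\partial\Omega)$, and check that it is bounded: by Cauchy--Schwarz one has $|F(v)|\leq \|f\|_{L^2(\Omega)}\|v\|_{L^2(\Omega)}+\|g\|_{L^2(\partial\Omega)}\|v\|_{L^2(\partial\Omega)}$, and since $\|v\|_{L^2(\Omega)}$ and $\|v\|_{L^2(\partial\Omega)}$ are both controlled by $\|v\|_{V^1(\Omega,\partial\Omega)}$ (the latter directly, the former via the Friedrichs inequality \eqref{poincare}), we get $|F(v)|\leq C(\|f\|_{L^2(\Omega)}+\|g\|_{L^2(\partial\Omega)})\,\|v\|_{V^1(\Omega,\partial\Omega)}$, so $F\in (V^1(\Omega,\partial\Omega))'$ with operator norm bounded by $C(\|f\|_{L^2(\Omega)}+\|g\|_{L^2(\partial\Omega)})$.

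Next I would verify the two hypotheses of Lax--Milgram for $E$. Continuity is essentially the upper bound already recorded in the proof of Lemma~\ref{lemma1}: each of the four terms of $E(u,v)$ is bilinear and bounded by a product of $V^1$-norms (for the nonlocal term one uses the continuity of $\theta_s$ as a map $H^s\to H^{-s}$ together with $\|u\|_{H^s(\partial\Omega)}\leq C\|u\|_{H^1(\partial\Omega)}$ for $s<1$), so $|E(u,v)|\leq C\|u\|_{V^1(\Omega,\partial\Omega)}\|v\|_{V^1(\Omega,\partial\Omega)}$. Coercivity is exactly the content of Lemma~\ref{lemma1}: since $E[u]=E(u,u)$ generates a norm equivalent to $\|\cdot\|_{V^1(\Omega,\partial\Omega)}$, there is a constant $c_0>0$ with $E(u,u)\geq c_0\|u\|^2_{V^1(\Omega,\partial\Omega)}$. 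Here I must also note that $E$ is symmetric (each term is manifestly symmetric in $u,v$), so one could invoke either the Lax--Milgram theorem or, equivalently, the Riesz representation theorem for the inner product induced by $E$.

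With boundedness of $F$, continuity and coercivity of $E$ established, Lax--Milgram yields a unique $u\in V^1(\Omega,\partial\Omega)$ satisfying $E(u,v)=F(v)$ for all test functions $v$, which is precisely the weak formulation \eqref{deb}. Finally I would read off the a priori bound \eqref{stimacont}: testing with $v=u$ gives $c_0\|u\|^2_{V^1(\Omega,\partial\Omega)}\leq E(u,u)=F(u)\leq \|F\|_{(V^1)'}\,\|u\|_{V^1(\Omega,\partial\Omega)}$, whence $\|u\|_{V^1(\Omega,\partial\Omega)}\leq c_0^{-1}\|F\|_{(V^1)'}\leq C(\|f\|_{L^2(\Omega)}+\|g\|_{L^2(\partial\Omega)})$, with $C$ depending only on the coercivity constant $c_0$ of $E$, as claimed.

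I do not anticipate a genuine obstacle here, since the heavy lifting has already been done: coercivity is the one delicate point and it is supplied wholesale by Lemma~\ref{lemma1}, whose own subtlety was handling the degeneracy on constants via the non-degeneracy of $\int_{\partial\Omega}bu^2\,d\ell$. The only mild care needed in this corollary is to make sure the functional $F$ is estimated in the $V^1$-norm rather than the weaker $H^1(\Omega)$-norm, which is why the Friedrichs inequality \eqref{poincare} must be invoked to absorb $\|v\|_{L^2(\Omega)}$; everything else is a direct appeal to Lax--Milgram.
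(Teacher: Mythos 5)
Your proposal is correct and follows exactly the route the paper intends: the corollary is stated as an immediate consequence of Lemma~\ref{lemma1} via the Lax--Milgram (or Riesz representation) theorem, with the bound \eqref{stimacont} obtained by testing with $v=u$, and your use of the Friedrichs inequality \eqref{poincare} to bound the functional $F$ in the $V^1$-norm is the right touch. No discrepancies with the paper's (unwritten but standard) argument.
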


\noindent We finally prove the desired regularity for the weak solution of the nonlocal Venttsel' problem.

\begin{teo} Let $\sigma$ be subject to condition \eqref{boundsigma}. Suppose that $b$ satisfies the assumptions of Lemma \ref{lemma1}, $f\in L^2_\sigma (\Omega)$, $g\in L^2(\partial\Omega)$. Then the problem \eqref{pbform1}-\eqref{pbform2} has a unique solution $u\in V^2_\sigma (\Omega,\partial\Omega)$, and the following inequality holds
\begin{equation}\label{regolarita}
\|u\|^2_{H^1(\Omega)}+\|r^\sigma D^2 u\|^2_{L^2(\Omega)}+\|u\|^2_{H^2(\partial\Omega)}\leq C(\|r^\sigma f\|^2_{L^2(\Omega)}+\|g\|^2_{L^2(\partial\Omega)}),
\end{equation}
where $C$ depends on $\sigma$ and the coercivity constant of $E$.
\end{teo}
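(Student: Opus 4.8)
The plan is to deduce the final theorem from the two tools already at hand: the a priori estimate of Theorem \ref{aprioriest} and the weak solvability of Corollary \ref{weaksol}. The only genuinely new work is to promote the weak solution from $V^1(\Omega,\partial\Omega)$ to $V^2_\sigma(\Omega,\partial\Omega)$, so that Theorem \ref{aprioriest} becomes applicable to it.

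First I would record weak solvability with the correct dependence on the data. Since $\sigma<\tfrac12$ in \eqref{boundsigma}, the weight $r^{-\sigma}$ is, near each vertex, locally in $L^q$ for some $q>2$; hence H\"older's inequality together with the embedding $H^1(\Omega)\hookrightarrow L^p(\Omega)$ (valid for every finite $p$ in dimension two) shows that
\[
v\longmapsto \int_\Omega f\,v\,dx+\int_{\partial\Omega} g\,v\,d\ell
\]
is a bounded linear functional on $V^1(\Omega,\partial\Omega)$ whose norm is $\le C\bigl(\|r^\sigma f\|_{L^2(\Omega)}+\|g\|_{L^2(\partial\Omega)}\bigr)$. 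Thus Corollary \ref{weaksol} extends verbatim to data $f\in L^2_\sigma(\Omega)$ and provides a unique weak solution $u\in V^1(\Omega,\partial\Omega)$ with
\[
\|u\|_{V^1(\Omega,\partial\Omega)}\le C\bigl(\|r^\sigma f\|_{L^2(\Omega)}+\|g\|_{L^2(\partial\Omega)}\bigr),
\]
so in particular $\|u\|_{L^2(\partial\Omega)}$ is already controlled by the right-hand side of \eqref{regolarita}. Uniqueness in $V^2_\sigma(\Omega,\partial\Omega)\subset V^1(\Omega,\partial\Omega)$ is then immediate.

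The heart of the argument is to show that this weak solution in fact lies in $V^2_\sigma(\Omega,\partial\Omega)$. I would do this by a continuation argument in the strength of the nonlocal term: for $t\in[0,1]$ replace $\theta_s$ by $t\,\theta_s$, so that $t=0$ is the purely local Venttsel' problem, whose $V^2_\sigma$-solvability is known (cf.\ \cite{Ce-Dl-La}), while $t=1$ is our problem. The form stays bounded and coercive on $V^1(\Omega,\partial\Omega)$ uniformly in $t$ (the nonlocal contribution is nonnegative and dominated by the $t=1$ case), so each problem is uniquely weakly solvable. Crucially, the proof of Theorem \ref{aprioriest} goes through with constants independent of $t$, because $\theta_s$ enters there only through the bounds $\|\theta_s(u)\|_{H^{-s}(\partial\Omega)}\le C\|u\|_{H^s(\partial\Omega)}$ and $\|\theta_s(u)\|_{H^{\beta-2s}(\partial\Omega)}\le C\|u\|_{H^2(\partial\Omega)}$, which only improve under multiplication by $t\le 1$. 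Combining this uniform version of \eqref{stima5} with the $V^1$-bound above to absorb $\|u\|_{L^2(\partial\Omega)}$ yields a uniform a priori estimate $\|u_t\|_{V^2_\sigma}\le C\bigl(\|r^\sigma f\|_{L^2(\Omega)}+\|g\|_{L^2(\partial\Omega)}\bigr)$ for every $V^2_\sigma$ solution $u_t$. The method of continuity then propagates $V^2_\sigma$-solvability from $t=0$ to $t=1$, and by uniqueness the solution so produced coincides with the weak solution $u$ of the first step. Once $u\in V^2_\sigma(\Omega,\partial\Omega)$ is known, Theorem \ref{aprioriest} (its hypotheses $s<3/4$ and \eqref{boundsigma} being in force) gives \eqref{stima5}, and replacing $\|u\|^2_{L^2(\partial\Omega)}$ by the bound from the first paragraph produces exactly \eqref{regolarita}.

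I expect the regularity step to be the main obstacle: the a priori estimate only controls a $V^2_\sigma$ solution \emph{assuming} one exists, so the real content is producing such a solution from the weak one. The delicate points are checking that the a priori constants are genuinely uniform along the homotopy and that the base problem at $t=0$ is $V^2_\sigma$-solvable through the corner analysis — the restriction \eqref{boundsigma} being dictated precisely by the Kondrat'ev and Nazarov--Plamenevsky weighted corner theory (\cite{kond}, \cite{nazplam}) invoked inside the proof of Theorem \ref{aprioriest}. As an alternative to the continuation, one may instead regularize the data (taking $f$ smooth and supported away from the vertices and $g$ smooth), solve the resulting problems, and pass to the limit using the same uniform estimate; both routes hinge on the uniformity that the Munchhausen trick of Theorem \ref{aprioriest} already supplies.
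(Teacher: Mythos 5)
Your overall architecture --- perturb off a solvable base problem, and use the a priori estimate of Theorem \ref{aprioriest} together with the coercivity bound behind Corollary \ref{weaksol} to control the perturbation --- is close to the paper's, and your preliminary step (extending weak solvability to $f\in L^2_\sigma(\Omega)$ via H\"older and the embedding $H^1(\Omega)\hookrightarrow L^p(\Omega)$, which works precisely because $\sigma<\tfrac12$) is correct and worth making explicit. The paper implements the perturbation as a Fredholm argument rather than a method of continuity: it shows that $\elle_\mu-\elle_0$, i.e. $u\mapsto\mu\bigl(0,\tfrac{\partial u}{\partial\nu}+\theta_s(u)\bigr)$, is \emph{compact} from $V^2_\sigma(\Omega,\partial\Omega)$ to $L^2_\sigma(\Omega)\times L^2(\partial\Omega)$ (this is exactly what the $\mathcal{K}_1(\delta)/\mathcal{K}_2(\delta)$ splitting of the normal-derivative trace and the interpolation bounds for $\theta_s$ in the proof of Theorem \ref{aprioriest} deliver), and then invokes triviality of $\mathrm{Ker}(\elle_1)$, which follows from Corollary \ref{weaksol}. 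Both mechanisms are legitimate given the estimates already proved, and your observation that the constants are uniform in the homotopy parameter because the nonlocal term is nonnegative and only shrinks is sound.

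The genuine gap is your base case. You scale only the nonlocal term, so at $t=0$ you are left with the full \emph{local} Venttsel' problem $-\Delta u=f$ in $\Omega$, $-\Delta_\ell u+\tfrac{\partial u}{\partial\nu}+bu=g$ on $\partial\Omega$, which still couples the interior and boundary equations through $\tfrac{\partial u}{\partial\nu}$. Its unique solvability in $V^2_\sigma(\Omega,\partial\Omega)$ for a general polygon, with $\sigma$ in the range \eqref{boundsigma}, is a theorem of essentially the same depth as the one you are proving, and it is not supplied by \cite{Ce-Dl-La}, which concerns transmission problems across Koch-type layers in a different geometric setting; citing it leaves the starting point of the continuity method unproved. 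The paper avoids this by putting the parameter $\mu$ in front of \emph{both} $\tfrac{\partial u}{\partial\nu}$ and $\theta_s(u)$: at $\mu=0$ the interior and boundary equations decouple completely, so one first solves the second-order ODE $-\Delta_\ell u+bu=g$ on $\partial\Omega$ (giving $u|_{\partial\Omega}\in H^2(\partial\Omega)$) and then the Dirichlet problem in $\Omega$ with that boundary datum, whose $H^2_\sigma$-solvability under \eqref{boundsigma} is classical Kondrat'ev/Nazarov--Plamenevsky theory. If you adjust your homotopy accordingly, your argument closes; your alternative route via regularizing the data does not repair the gap, since it still presupposes the very $V^2_\sigma$-regularity of weak solutions that is at stake.
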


\begin{proof} We introduce the set of operators $\elle_\mu\colon V^2_\sigma(\Omega,\partial\Omega)\to L^2_\sigma (\Omega)\times L^2(\partial\Omega)$
$$\elle_\mu u:=\left(-\Delta u,\left(-\Delta_\ell u+bu+\mu\left(\frac{\partial u}{\partial\nu}+\theta_s(u)\right)\right)\Big|_{\partial\Omega}\right).$$

We claim that the operator $\elle_0$ is invertible. Indeed, it corresponds to the boundary value problem
$$-\Delta u=f \quad\,\text{in}\,\,\Omega,\qquad -\Delta_\ell u+bu=g\quad\,\text{on}\,\,\partial\Omega.$$
Here the equation in $\Omega$ and the boundary condition are decoupled. So we can first solve the boundary equation and then use its solution as the Dirichlet datum for the equation in the domain. The estimates similar to Theorem \ref{aprioriest} show that the solution belongs to $V^2_\sigma (\Omega,\partial\Omega)$ and inequality \eqref{regolarita} holds. So the claim follows.\\
The estimates in Theorem \ref{aprioriest} show that the operator
$$\elle_\mu-\elle_0\colon V^2_\sigma(\Omega,\partial\Omega)\to L^2_\sigma (\Omega)\times L^2(\partial\Omega);\qquad\elle_\mu u-\elle_0 u=\mu\left(0,\frac{\partial u}{\partial\nu}+\theta_s(u)\right)$$ is compact. Since ${\text{Ker}}(\elle_1)$ is trivial by Corollary \ref{weaksol}, the operator $\elle_1$ is also invertible, and the proof is complete.
\end{proof}

If $\Omega$ is a convex polygon, then $\alpha<\pi$. So we can put $\sigma=0$ and obtain the following result.

\begin{corol} Let $\Omega$ be a convex polygon. Suppose that $b$ satisfies the assumptions of Lemma \ref{lemma1}, $f\in L^2 (\Omega)$, $g\in L^2(\partial\Omega)$. Then the problem \eqref{pbform1}-\eqref{pbform2} has a unique solution $u\in H^2(\Omega)\cap H^2(\partial\Omega)$, and the following inequality holds
\begin{equation}\notag
\|u\|^2_{H^2(\Omega)}+\|u\|^2_{H^2(\partial\Omega)}\leq C(\|f\|^2_{L^2(\Omega)}+\|g\|^2_{L^2(\partial\Omega)}),
\end{equation}
where $C$ depends on the coercivity constant of $E$.
\end{corol}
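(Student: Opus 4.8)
The plan is to deduce this corollary directly from the preceding regularity theorem by specializing to the weight exponent $\sigma=0$, which is admissible precisely because of convexity, and then unwinding the definitions of the weighted objects to recover the unweighted statement.

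First I would verify admissibility. Convexity of $\Omega$ forces every interior angle, and in particular the largest one, to satisfy $\alpha<\pi$, hence $\pi/\alpha>1$ and $1-\pi/\alpha<0$. Thus the choice $\sigma=0$ satisfies both constraints in \eqref{boundsigma}, namely $1-\pi/\alpha<0<\tfrac12$ and $0\geq-\tfrac12$. Since $f\in L^2(\Omega)=L^2_0(\Omega)$ and $g\in L^2(\partial\Omega)$, the hypotheses of the preceding theorem hold for $\sigma=0$, so it yields a unique solution $u\in V^2_0(\Omega,\partial\Omega)$ satisfying \eqref{regolarita}.

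Next I would translate the weighted conclusion into the unweighted one. With $\sigma=0$ the weight $r^\sigma\equiv1$, so $\|r^\sigma f\|_{L^2(\Omega)}=\|f\|_{L^2(\Omega)}$ and $\|r^\sigma D^2u\|_{L^2(\Omega)}=\|D^2u\|_{L^2(\Omega)}$. Consequently the defining condition $u\in H^1(\Omega)$ with $r^\sigma D^2u\in L^2(\Omega)$ becomes $u\in H^1(\Omega)$ with $D^2u\in L^2(\Omega)$, i.e. $u\in H^2(\Omega)$; together with $\gamma_0 u\in H^2(\partial\Omega)$ this identifies $V^2_0(\Omega,\partial\Omega)=H^2(\Omega)\cap\{u:\gamma_0 u\in H^2(\partial\Omega)\}$. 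Using the identity $\|u\|^2_{H^2(\Omega)}=\|u\|^2_{H^1(\Omega)}+\|D^2u\|^2_{L^2(\Omega)}$, estimate \eqref{regolarita} collapses to the asserted inequality, and uniqueness is inherited from the theorem.

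The argument is essentially bookkeeping, so I do not expect a genuine obstacle; the only point requiring a word of care is the dependence of the constant. In the theorem $C$ depends on $\sigma$ and on the coercivity constant of $E$, but here $\sigma=0$ is a fixed choice dictated solely by the geometry, so the $\sigma$-dependence is absorbed into the (untracked) geometric dependence, leaving $C$ a function of the coercivity constant of $E$ alone, exactly as claimed.
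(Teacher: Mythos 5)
Your proposal is correct and matches the paper's own argument: the paper likewise notes that convexity gives $\alpha<\pi$, so $\sigma=0$ is admissible in \eqref{boundsigma}, and then obtains the corollary by specializing the preceding theorem, where the weight $r^0\equiv1$ turns the weighted estimate \eqref{regolarita} into the unweighted $H^2(\Omega)\cap H^2(\partial\Omega)$ bound. Your extra bookkeeping on identifying $V^2_0(\Omega,\partial\Omega)$ with $H^2(\Omega)$ functions having $H^2(\partial\Omega)$ traces, and on the constant's dependence, is exactly the (implicit) content of the paper's one-line deduction.
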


If $\Omega$ is not convex, then $\pi<\alpha<2\pi$. In this case we obtain the following result.

\begin{teo} Let $\Omega$ be a non-convex polygon. Suppose that $b$ satisfies the assumptions of Lemma \ref{lemma1}, $f\in L^2 (\Omega)$, $g\in L^2(\partial\Omega)$. Then a unique solution of the problem \eqref{pbform1}-\eqref{pbform2} admits the following decomposition:
\begin{equation}
u(x)=\sum_{j\,:\,\alpha_j>\pi} c_j\chi (r_j) r^\frac{\pi}{\alpha_j}\sin(\pi\omega_j\alpha_j^{-1})+w(x).
\label{decomposizione}
\end{equation}
Here $(r_j,\omega_j)$ are local polar coordinates in a neighborhood of the angle with opening $\alpha_j$, $\chi$ is a cutoff function near the origin, and $w\in H^2(\Omega)\cap H^2(\partial\Omega)$. Moreover, the following inequality holds
\begin{equation}\notag
\|w\|^2_{H^2(\Omega)}+\|w\|^2_{H^2(\partial\Omega)}+\sum_{j\,:\,\alpha_j>\pi}|c_j|^2\leq C(\|f\|^2_{L^2(\Omega)}+\|g\|^2_{L^2(\partial\Omega)}),
\end{equation}
where $C$ depends on the coercivity constant of $E$.
\end{teo}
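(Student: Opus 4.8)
The plan is to reduce the interior equation to a pure Dirichlet problem for $-\Delta$ and then to read off the corner singularities from the classical weighted (Kondratev) asymptotic decomposition, using the boundary regularity already furnished by Theorem \ref{aprioriest}. Since $\Omega$ is non-convex we have $\pi<\alpha<2\pi$, so the interval $(1-\pi/\alpha,\,1/2)$ is nonempty and contained in $(0,1/2)$; I fix $\sigma$ there. As $r$ is bounded on $\Omega$ and $\sigma>0$, $f\in L^2(\Omega)$ gives $\|r^\sigma f\|_{L^2(\Omega)}\le C\|f\|_{L^2(\Omega)}$, so $f\in L^2_\sigma(\Omega)$ and the hypotheses of the preceding theorem hold. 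Hence there is a unique $u\in V^2_\sigma(\Omega,\partial\Omega)$ satisfying \eqref{regolarita}; in particular $\gamma_0 u\in H^2(\partial\Omega)$ and $r^\sigma D^2u\in L^2(\Omega)$, i.e. $u$ is locally of class $H^2_\sigma$ near each vertex.

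Next, exactly as in the proof of Theorem \ref{aprioriest}, I subtract a smooth $U$ that is linear near the corners and matches $u$ and $\nabla_\ell u$ at the vertices; then $v:=u-U$ solves the Dirichlet problem $-\Delta v=f+\Delta U\in L^2(\Omega)=L^2_0(\Omega)$ with data $v|_{\partial\Omega}\in H^{3/2}(\partial\Omega)$ vanishing at the vertices. Thus $v$ is known a priori to lie in $H^2_\sigma$ with $\sigma>0$, while the data lie in the unweighted ($\gamma=0$) classes. I then invoke the weighted asymptotic decomposition for the Dirichlet Laplacian in a plane sector (Kondratev; see \cite{nazplam}): lowering the weight from $\sigma$ down to $0$ introduces one singular term for each eigenvalue $\lambda$ of the corner operator pencil lying in the strip $1-\sigma<\lambda<1$, and leaves a remainder in $H^2_0=H^2(\Omega)$.

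It remains to identify the contributing eigenvalues and to recover the boundary regularity of the remainder. At the corner $\alpha_j$ the Dirichlet pencil has eigenvalues $\lambda_{j,k}=k\pi/\alpha_j$ with angular parts $\sin(k\pi\omega_j/\alpha_j)$. Because $\sigma>1-\pi/\alpha\ge 1-\pi/\alpha_j$, the first eigenvalue $\pi/\alpha_j$ lies in $(1-\sigma,1)$ precisely when $\pi/\alpha_j<1$, i.e. $\alpha_j>\pi$; all higher eigenvalues satisfy $\lambda_{j,k}\ge 2\pi/\alpha_j>1$ (as $\alpha_j<2\pi$), and the convex corners give $\pi/\alpha_j>1$, so none of these enters the strip. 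This yields exactly \eqref{decomposizione}, with one term $r_j^{\pi/\alpha_j}\sin(\pi\omega_j/\alpha_j)$ per non-convex corner and $w\in H^2(\Omega)$. Moreover each such singular function vanishes on both edges of its corner (since $\sin(\pi\omega_j/\alpha_j)=0$ at $\omega_j=0$ and $\omega_j=\alpha_j$), so its trace on $\partial\Omega$ is zero; hence $w|_{\partial\Omega}=\gamma_0 u\in H^2(\partial\Omega)$ by Theorem \ref{aprioriest}, i.e. $w\in H^2(\Omega)\cap H^2(\partial\Omega)$. The stated estimate then follows by combining \eqref{regolarita} with the continuous dependence of the coefficients $c_j$ and of $w$ on the data in the decomposition theorem, together with $\|r^\sigma f\|_{L^2(\Omega)}\le C\|f\|_{L^2(\Omega)}$.

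The main obstacle is to ensure that neither the Venttsel' boundary condition nor the nonlocal term $\theta_s$ generates corner singularities beyond the Dirichlet ones. This is precisely what the reduction resolves: once $\gamma_0 u\in H^2(\partial\Omega)$ is known from Theorem \ref{aprioriest}, the interior problem is a genuine Dirichlet problem with fixed, sufficiently regular boundary data, so its asymptotics near the vertices are governed solely by the Dirichlet operator pencil. The nonlocal and boundary operators influence the decomposition only through the already-established boundary regularity, which is exactly why the singular exponents reduce to the familiar $\pi/\alpha_j$.
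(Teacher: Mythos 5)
Your proposal is correct and follows essentially the same route as the paper: subtract the smooth corrector $U$ from the proof of Theorem \ref{aprioriest}, reduce to a Dirichlet problem with $-\Delta v\in L^2(\Omega)$ and $v|_{\partial\Omega}\in H^{3/2}(\partial\Omega)$, and invoke the Kondratev/Nazarov--Plamenevsky asymptotic decomposition (Theorem 3.4, Chapter 2 of \cite{nazplam}) to extract one singular term $r_j^{\pi/\alpha_j}\sin(\pi\omega_j/\alpha_j)$ per reentrant corner. Your additional verifications (the eigenvalue count in the strip $1-\sigma<\lambda<1$, the vanishing of the singular traces, hence $w\in H^2(\partial\Omega)$) are details the paper delegates to the cited theorem, and they check out.
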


\begin{proof} Following the lines of the proof of Theorem \ref{aprioriest}, we obtain the Dirichlet problem for $v=u-U$
\begin{equation}\notag
-\Delta v\in L^2 (\Omega);\qquad v|_{\partial\Omega}\in H^\frac{3}{2} (\partial\Omega)
\end{equation}
instead of \eqref{problemav}. Theorem 3.4, Chapter 2 in \cite{nazplam} gives the representation \eqref{decomposizione} for $v$. Since $U$ is smooth, the statement follows. 
\end{proof}

\begin{oss} Without any sign condition on the coefficient $b$, the problem \eqref{pbform1}-\eqref{pbform2} is not necessarily solvable, but it has the Fredholm property.
\end{oss}

\begin{oss} All our results easily hold for an arbitrary piecewise smooth domain $\Omega\subset\R^2$ without cusps.
\end{oss}

\bigskip

\noindent {\bf Acknowledgements.} S. C., M. R. L. and P. V. have been supported by the Gruppo Nazionale per l'Analisi Matematica, la Probabilit\`a e le loro Applicazioni (GNAMPA) of the Istituto Nazionale di Alta Matematica (INdAM). A. N. was partially supported by Russian Foundation for Basic Research (RFBR) grant 15-01-07650.\\
This paper was completed during the visit of A. N. to Rome in February 2017. He would like to thank Universit\`a di Roma "Sapienza" for the hospitality.

\medskip

%%%%%%%%%%%%%%%%%%%%%%%%%%%%%%%%%%%%%%%%%%%%%%%% bibliografia %%%%%%%%%%%%%%%%%%%%%%%%%%%%%%%%%%%%%%%%%%%%%%

\end{document}